\renewcommand{\leq}{\leqslant}
\renewcommand{\cdots}{\ldots}
\newcommand{\tab}{\hspace{1cm}}
\newcommand{\su}{\subseteq}
\newcommand{\bc}{\bigcup}
\newcommand{\cd}{\cdots}
\titleformat{\section}{\bfseries\centering}{\thesection \hspace{0.5cm}}{12pt}{}
\titleformat{\subsection}{\bfseries}{\thesubsection \hspace{0.5cm}}{12pt}{}
\newtheorem{den}{Definition}[section]
\newtheorem{thm}[den]{Theorem}
\newtheorem{lem}[den]{Lemma}
\newtheorem{cor}[den]{Corollary}
\newtheorem{pro}[den]{Proposition}
\newtheorem{con}[den]{Conjecture}
\begin{document}\setlength{\parindent}{0cm}\thispagestyle{empty}
	\begin{center}
		\Large \textbf{On Legendre Cordial Labeling of Complete Graphs}
	\end{center}
	\vspace{2mm}
	
	\textbf{Jason D. Andoyo}
	
	University of Southeastern Philippines, Davao City, Philippines
	
	\section*{Abstract} \small
	\tab Let $p$ be an odd prime. For a simple connected graph $G$ of order $n$, a bijective function $f:V(G)\to\{1,2,\cd,n\}$ is said to be a Legendre cordial labeling modulo $p$ if the induced function $f_p^*:E(G)\to \{0,1\}$, defined by $f_p^* (uv)=0$ whenever $([f(u)+f(v)]/p)=-1$ or $f(u)+f(v)\equiv 0(\text{mod } p)$ and $f_p^* (uv)=1$ whenever $([f(u)+f(v)]/p)=1$, satisfies the condition $|e_{f_p^*}(0)-e_{f_p^*}(1)|\leq 1$ where $e_{f_p^*}(i)$ is the number of edges with label $i$ ($i=0,1$). This paper explores the characterization of the Legendre cordial labeling modulo $p$ of the complete graph $K_n$ using the concept of Legendre graph. 
	
	\vspace{2mm}
	
	\textbf{Keywords:} odd prime, Legendre cordial labeling, complete graph, Legendre graph
	
	\vspace{2mm}
	
	\textbf{2020 Mathematics Subject Classification:} 05C78, 11A07, 11A15
	
	\normalsize

	\section{Introduction}
	\tab A graph $G=(V,E)$ consists of a \textit{vertex set} $V=V(G)$ and an \textit{edge set} $E=E(G)$. The elements of $V$ and $E$ are called \textit{vertices} and \textit{edges}, respectively. If $|V|=n$ and $|E|=m$, then $G$ has \textit{order} $n$ and \textit{size} $m$. For $v\in V(G)$, the \textit{open neighborhood} of $v$ is the set $N(v)=\{u:uv\in E(G)\}$ and the degree of $v$ is $\deg(v)=|N(v)|$. The \textit{minimum degree} of $G$ is $\delta(G)=\min\{\deg(v):v\in V(G)\}$. Similarly, the \textit{maximum degree} of $G$ is $\Delta(G)=\max\{\deg(v):v\in V(G)\}$. For further graph-theoretic notions, refer to \cite{Chartrand}.
	 
	\tab Graph theory, an important area of discrete mathematics, studies the relationships between vertices and edges in modeling complex relational structures. A cornerstone of this field is the fundamental theorem of graph theory, which states that the sum of the degrees of all vertices in a graph equals twice its size. This basic result shows the natural connection between local vertex properties and the overall structure of a graph, serving as a starting point for many further studies and applications. Notable applications of graph theory include its use in network design, social interaction modeling, and transport optimization.
	
	\tab In a similar way, number theory, a key area of pure mathematics, focuses on the properties of integers and their arithmetic relationships. It has well-known applications in areas such as cryptography, coding theory, and primality testing. Among its central tools is the Legendre symbol, first defined by Legendre in his 1798 work \textit{Essai sur la théorie des nombres}. An integer $a$, relatively prime to an odd prime $p$, is called a \textit{quadratic residue of $p$} if the quadratic congruence $x^2\equiv a(\text{mod } p)$ has a solution; otherwise $a$ is a \textit{quadratic nonresidue of $p$}. The \textit{Legendre symbol} $(a/p)$ is defined as follows:
	\begin{equation*}
		(a/p)=\begin{cases}
			1 &\text{ if $a$ is a quadratic residue of $p$}\\
			-1 &\text{ if $a$ is a quadratic nonresidue of $p$}.
		\end{cases}
	\end{equation*}
	One important property of the Legendre symbol is $(a/p)=(b/p)$ if $a$ and $b$ are integers relatively prime to odd prime $p$ with $a\equiv b(\text{mod } p)$. \cite{Burton}
	
	\tab One of the fundamental concepts in graph theory is graph labeling, which serves as a bridge between graph theory and number theory. Graph labeling refers to the assignment of numbers (or, in some cases, sets) to the vertices and/or edges of a graph $G$ under specified conditions \cite{Gallian}. Among its many applications, three of the most prominent are in coding theory, communication networks, and circuit design.
	
	\tab Cordial labeling, introduced by Cahit \cite{Cahit} in 1987, was inspired by Rosa’s \cite{Rosa} graceful labeling and represents a broader form of vertex labeling. In this scheme, each vertex of a graph G is assigned a label from {0,1}, and each edge is then labeled according to the sum of its endpoint labels taken modulo 2, that is, an edge receives label 0 if its endpoints have the same label, and label 1 if its endpoints have different labels. The labeling is called cordial if the numbers of vertices labeled 0 and 1 differ by at most one, and likewise the numbers of edges labeled 0 and 1 differ by at most one. Recently introduced variants of cordial labeling include topological cordial labeling by Selestin Lina et al. \cite{Selestin} and Möbius cordial labeling by Asha Rani et al. \cite{Asha}.
	
	\tab Another recent variation of cordial labeling is Legendre cordial labeling, introduced in \cite{Andoyo1}. Several classes of graphs, including path graphs, cycle graphs, and star graphs, have been studied. However, no research has been conducted on complete graphs. Therefore, this paper investigates the Legendre cordial labeling of complete graphs.
	
	\tab Let $p$ be an odd prime. For a simple connected graph $G$ of order $n$ define $f:V(G)\to \{1,2,\cd,n\}$ as a bijective function. Then $f$ is called \textit{Legendre cordial labeling modulo $p$} if the induced function $f_p^*:E(G)\to \{0,1\}$, defined by
	\begin{equation*}
		f_p^*(uv)=\begin{cases}
			0 &\text{ if $([f(u)+f(v)]/p)=-1$ or $f(u)+f(v)\equiv 0(\text{mod }p)$}\\
			1 &\text{ if $([f(u)+f(v)]/p)=1$,}
		\end{cases}
	\end{equation*}
	satisfies the condition $|e_{f_p^*}(0)-e_{f_p^*}(1)|\leq 1$ where $e_{f_p^*}(i)$ is the number of edges with label $i$ ($i=0,1$). A graph that admits a Legendre cordial labeling modulo $p$ is called \textit{Legendre cordial graph modulo $p$}.
	
	\tab The Möbius graph, introduced by Vasumathi and Vangipuram \cite{Vasumathi}, is defined on the vertex set $V=\{1,2,\cd,n\}$ with edge set $E=\{uv:\mu(uv)=0\}$ where $\mu$ denotes the Möbius function. This construction illustrates how number-theoretic functions can determine adjacency in graphs. Inspired by this idea, we define the Legendre graph, where adjacency is instead governed by the Legendre symbol. The Legendre graph thus extends the Möbius framework by incorporating quadratic residue conditions into the graph structure.
	
	\tab Let $p$ be an odd prime and $k\in \{-1,1\}$. A \textit{Legendre graph} $L_n^k(f,p)$ of order $n$ is a graph with edge set
	\begin{equation*}
		E(L_n^k(f,p))=\{ab:([f(a)+f(b)])=k\}
	\end{equation*}
	where $f:V(L_n^k(f,p))\to \{1,2,\cd,n\}$ is a bijective function.
	
	\section{Basic Concepts}
	\begin{den}
		A complete graph $K_n$ of order $n$ is a graph where two distinct vertices are adjacent. The size of $K_n$ is $\frac{n(n-1)}{2}$.
	\end{den}
	
	\begin{thm}[\cite{Burton}]\label{2.3}
		If $p$ is an odd prime, then $p$ has $\frac{p-1}{2}$ quadratic residues and $\frac{p-1}{2}$ quadratic nonresidues of $p$ in $\{1,2,...,p-1\}$.
	\end{thm}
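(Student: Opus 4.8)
The plan is to study the squaring map $\sigma\colon\{1,2,\ldots,p-1\}\to\{1,2,\ldots,p-1\}$ that sends $a$ to the least positive residue of $a^2$ modulo $p$. By definition, the quadratic residues of $p$ lying in $\{1,2,\ldots,p-1\}$ are exactly the elements of the image of $\sigma$, and the quadratic nonresidues are the elements of the complement. Hence the whole theorem reduces to the counting statement that the image of $\sigma$ has exactly $\frac{p-1}{2}$ elements.

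First I would prove that $\sigma$ is two-to-one. Suppose $\sigma(a)=\sigma(b)$, that is $a^2\equiv b^2\pmod p$. Then $p\mid (a-b)(a+b)$, and since $p$ is prime this forces $a\equiv b\pmod p$ or $a\equiv -b\pmod p$; for $a,b\in\{1,\ldots,p-1\}$ this means $b=a$ or $b=p-a$. So the fiber of $\sigma$ over any element of its image is contained in a set of the form $\{a,\,p-a\}$. Next I would check that these two preimages are genuinely distinct: if $a\equiv -a\pmod p$ then $p\mid 2a$, which is impossible because $p$ is odd and $1\le a\le p-1$. Therefore every nonempty fiber of $\sigma$ has exactly two elements.

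Since $\{1,2,\ldots,p-1\}$ has $p-1$ elements and is partitioned into the fibers of $\sigma$, each of size $2$, the number of fibers — equivalently the number of quadratic residues of $p$ in $\{1,\ldots,p-1\}$ — is $\frac{p-1}{2}$. The remaining $(p-1)-\frac{p-1}{2}=\frac{p-1}{2}$ elements are then the quadratic nonresidues, which gives both counts at once.

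The argument is essentially routine, and the only steps needing a word of care are the use of primality to split $a^2-b^2\equiv 0\pmod p$ into the two linear congruences (that is, that $\mathbb{Z}_p$ has no zero divisors) and the use of the hypothesis that $p$ is odd to ensure $a\ne p-a$. If one prefers a different route, one may instead fix a primitive root $g$ modulo $p$ and note that $g^k$ is a quadratic residue precisely when $k$ is even; exactly half of the exponents $0,1,\ldots,p-2$ are even, which yields the same split.
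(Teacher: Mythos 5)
Your proof is correct and complete: the two-to-one property of the squaring map on $\{1,\ldots,p-1\}$, justified by primality (no zero divisors) and oddness ($a\not\equiv -a$), gives both counts at once. The paper does not prove this statement at all — it is quoted from Burton's textbook — and your argument is essentially the standard one found there, so there is nothing to reconcile.
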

	
	\section{Main Results}
	
	\tab Throughout the discussion, $p$ is denoted as an odd prime.
	\begin{lem}\label{3.2}
		Let $L_n^k(f,p)$ be a Legendre graph of order $n$ and let $q=\left\lfloor\frac{n}{p}\right\rfloor$. Suppose that $v\in V(L_n^k(f,p))$ and consider the following:\\
		for $1\leq f(v)\leq qp$, define
		\begin{equation*}
			\omega^k(v)=\bc_{c\in F}\{\xi:f(v)+c\equiv \xi(\text{\normalfont mod }p), \text{ }0\leq \xi<p,\text{ }(\xi/p)=k\},
		\end{equation*}
		for $qp+1\leq f(v)\leq n$, define
		\begin{equation*}
			\pi^k(v)=\bc_{c\in F-{\varepsilon(v)}}\{\xi:f(v)+c\equiv \xi(\text{\normalfont mod }p), \text{ }0\leq \xi<p,\text{ }(\xi/p)=k\},
		\end{equation*}
		where $f(v)\equiv \varepsilon(v)(\text{\normalfont{mod} }p)$, $1\leq \varepsilon(v)\leq n-qp$, and $F=\{1,2,\cd,n-qp\}$. Then\\
		for $1\leq f(v)\leq qp$,
		\begin{equation}\label{eq1}
			\deg(v)=\begin{cases}
				q\left(\frac{p-1}{2}\right)-1+|\omega^k(v)|&\text{ if }(2f(v)/p)=k\\
				q\left(\frac{p-1}{2}\right)+|\omega^k(v)|&\text{ if }(2f(v)/p)=-k\text{ or }f(v)\equiv 0(\text{\normalfont{mod }}p),
			\end{cases}
		\end{equation}
		for $qp+1\leq f(v)\leq n$,
		\begin{equation}\label{eq2}
			\deg(v)=q\left(\frac{p-1}{2}\right)+|\pi^k(v)|.
		\end{equation}
		Consequently, the size of $L_n^k(f,p)$ is
		\begin{equation*}
			\frac{1}{4}\left[n^2-n-2nq+pq^2+q-\psi+k(\mathcal{S}_1+\mathcal{S}_2)\right]
		\end{equation*}
		where $\psi=\max\{0,2(n-qp)-p+1\}$, and\\
		\begin{equation*}
			\mathcal{S}_1=\sum_{\substack{s=2\\s\neq p}}^{n-qp+1}(s-1-\delta_s)(s/p)\text{ and }\mathcal{S}_2=\sum_{\substack{s=n-qp+2\\s\neq p}}^{2(n-qp)}[2(n-qp)-s+1-\delta_s](s/p)
		\end{equation*}	
		given that
		\begin{equation*}
			\delta_s=\begin{cases}
				1&\text{ if $s$ is even}\\
				0&\text{ otherwise}.
			\end{cases}
		\end{equation*}
	\end{lem}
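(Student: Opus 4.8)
The plan is to establish the two degree formulas by a residue‑class count, then extract the size from $2|E(L_n^k(f,p))|=\sum_v\deg(v)$. Write $r=n-qp$, so $0\leq r\leq p-1$ and $F=\{1,\dots,r\}$, and for $t\in\{1,\dots,n\}$ let $v_t$ denote the vertex with $f(v_t)=t$. The one combinatorial fact underlying everything is that among $\{1,\dots,n\}$ the residue class $j\pmod p$ (with $0\leq j\leq p-1$) contains $q+1$ integers when $1\leq j\leq r$ and $q$ integers otherwise. Since $f$ is a bijection, $\deg(v_t)$ equals the number of $s\in\{1,\dots,n\}\setminus\{t\}$ with $([t+s]/p)=k$; partitioning such $s$ by the residue $\xi\equiv t+s\pmod p$ and invoking Theorem~\ref{2.3} (exactly $\tfrac{p-1}{2}$ residues $\xi$ satisfy $(\xi/p)=k$), each of the $q$ complete blocks of length $p$ contributes $\tfrac{p-1}{2}$, the leftover partial block contributes the number of admissible $\xi$ with $(\xi-t)\bmod p\in F$, and the value $s=t$ must be discarded, which happens exactly when $(2t/p)=k$.

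For $1\leq t\leq qp$ the partial‑block count is precisely $|\omega^k(v_t)|$ (write $\xi\equiv t+c$, $c\in F$), and $1$ is subtracted iff $(2t/p)=k$, which is \eqref{eq1}, the branch ``$f(v)\equiv 0\pmod p$'' being the subcase $p\mid t$ where $(2t/p)=0\neq k$. For $qp+1\leq t\leq n$ put $t=qp+\varepsilon$ with $\varepsilon=\varepsilon(v_t)\in F$; the same count gives $q\tfrac{p-1}{2}+|\{\xi:(\xi/p)=k,\ (\xi-\varepsilon)\bmod p\in F\}|-\mathbf{1}\{(2\varepsilon/p)=k\}$, and since $2\varepsilon$ is the unique residue of the form $\varepsilon+c$ with $c=\varepsilon$, deleting $c=\varepsilon$ from $F$ in the union removes exactly that $\xi$ when $(2\varepsilon/p)=k$ and nothing otherwise; this yields $q\tfrac{p-1}{2}+|\pi^k(v_t)|$, i.e.\ \eqref{eq2}. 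The degenerate case $r=0$ is immediate, $F$ being empty.

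For the size, sum the degree formulas over $t=1,\dots,n$. The constant terms give $nq\tfrac{p-1}{2}$. Interchanging summations, $\sum_{t=1}^{qp}|\omega^k(v_t)|=\sum_{(\xi/p)=k}|\{t\leq qp:(\xi-t)\bmod p\in F\}|=\tfrac{p-1}{2}\,qr$, while $\sum_{t=qp+1}^{n}|\pi^k(v_t)|=\sum_{(\xi/p)=k}g(\xi)-\beta$, where $g(\xi)=|\{(c,d)\in F\times F:c+d\equiv\xi\pmod p\}|$ and $\beta=|\{i\in F:(2i/p)=k\}|$, and the discarded terms contribute $-\sum_{t=1}^{qp}\mathbf{1}\{(2t/p)=k\}=-q\tfrac{p-1}{2}$. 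Since $\{(\xi/p)=1\}$ and $\{(\xi/p)=-1\}$ split the nonzero residues evenly, $\sum_{(\xi/p)=k}g(\xi)=\tfrac12[(r^2-g(0))+k\sum_\xi g(\xi)(\xi/p)]$; here $g(0)$ counts ordered pairs $c,d\in F$ with $c+d=p$, which is $\max\{0,2r-p+1\}=\psi$, and $\sum_\xi g(\xi)(\xi/p)=\sum_{c,d\in F}([c+d]/p)$, which after reindexing by $s=c+d$ equals $\sum_{s=2}^{r+1}(s-1)(s/p)+\sum_{s=r+2}^{2r}(2r-s+1)(s/p)$ (a term $s=p$, if present, contributing $0$). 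Peeling off the even values $s=2i$, $i\in F$, lowers each coefficient by $\delta_s$ and leaves $\mathcal{S}_1+\mathcal{S}_2+\sum_{i=1}^{r}(2i/p)$; moreover $k\sum_{i=1}^{r}(2i/p)=2\beta-r$ because $k(2i/p)\in\{1,-1\}$ for $i\in F$. Collecting everything gives $4|E(L_n^k(f,p))|=(p-1)q(n+r-1)+r^2-r-\psi+k(\mathcal{S}_1+\mathcal{S}_2)$, and substituting $r=n-qp$ and expanding turns $(p-1)q(n+r-1)+r^2-r$ into $n^2-n-2nq+pq^2+q$, which completes the proof.

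I expect the main obstacle to be the bookkeeping in this last paragraph: one must see that the ``self‑pair'' corrections $\mathbf{1}\{(2t/p)=k\}$ — which, once the Legendre‑symbol identities are unwound, amount to $2\beta$ after the factor $k$ is cleared — cancel exactly against the even‑index terms $\sum_{i=1}^{r}(2i/p)$ separated out of $\sum_{c,d\in F}([c+d]/p)$, leaving only the $-r$ and the parity adjustments $\delta_s$. One should also note that $\mathcal{S}_1,\mathcal{S}_2$ may harmlessly include $s=p$ since $(p/p)=0$, and verify the boundary cases $r=0$ and $r=p-1$ (where $s=r+1=p$).
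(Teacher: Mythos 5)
Your argument is correct and follows essentially the same route as the paper: count neighbors residue class by residue class (the paper phrases this as blocks of length $p$), obtaining $q\left(\frac{p-1}{2}\right)$ plus the partial-block term minus the self-pair correction, then apply the handshake lemma, evaluate $\sum|\omega^k(v)|=qr\left(\frac{p-1}{2}\right)$, and compute $\sum|\pi^k(v)|$ by reindexing over $s=c+d$ with multiplicities $\eta_s$, removing the pairs with $s\equiv 0\ (\mathrm{mod}\ p)$ (which produces $\psi$) and the diagonal pairs $c=d$ (which produces $\delta_s$). The only cosmetic difference is that you fold the diagonal correction into $\beta$ and the separated even-$s$ terms rather than excluding $c=i$ from the double sum at the outset, and the algebra closes identically.
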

	
	\begin{proof}
		Suppose that $k=1$. Observe that the range of $f$, $R_f=\{1,2,\cd,n\}$, can be decomposed into
		\begin{equation*}
			R_f=\left[\bc_{r=0}^{q-1}\lambda_r\right]\cup\Lambda
		\end{equation*}
		where $\lambda_r=\{rp+1,rp+2,\cd,rp+p\}$ and $\Lambda=\{qp+1,qp+2,\cd,n\}$. In addition, define
		\begin{equation*}
			D(\Theta)=\{a\in V(L_n^1(f,p)):f(a)\in \Theta\}
		\end{equation*}
		for $\Theta=\lambda_0,\lambda_1,\cd,\lambda_{q-1},\Lambda$. Let $v\in V(L_n^1(f,p))$. It should be noted that $uv\in E(V(L_n^1(f,p)))$ if and only if $([f(u)+f(v)]/p)=1$.
		
		\textbf{Case 1.} Let $\Gamma$ be a set such that $v\in \Gamma$, $\Gamma\in \{\lambda_0,\lambda_1,\cd,\lambda_{q-1}\}\cup\{\Lambda\}$.
		
		\textbf{Claim 1.} $v$ is adjacent to $\frac{p-1}{2}$ vertices of $D(\lambda_s)$, $\lambda_s\neq \Gamma$
		
		Consider the set $\lambda_s$, $0\leq s\leq q-1$, so that $\lambda_s\neq \Gamma$. Let $u\in D(\lambda_s)$. Define the set
		\begin{equation*}
			\alpha_v=\bc_{u\in D(\lambda_s)}\{\xi:f(u)+f(v)\equiv \xi(\text{mod }p),\text{ }0\leq \xi<p\}.
		\end{equation*}
		If $f(v)\equiv \theta(\text{mod }p)$, $0\leq \theta<p$, then
		\begin{equation*}
			\alpha_v=\{1+\theta, 2+\theta,\cd,p-1,0,1,\cd,\theta\}=\{0\}\cup\{1,2,\cd,p-1\}.
		\end{equation*}
		It is evident that $p$ has $\frac{p-1}{2}$ quadratic residues in $\alpha_v$ according to Theorem~\ref{2.3}. This proves Claim 1.
		
		\textbf{Case 2.} Let $1\leq f(v)\leq qp$ and suppose that $v\in D(\lambda_r)$, $0\leq r\leq q-1$.
		
		\textbf{Claim 2.} $v$ is adjacent to $\frac{p-1}{2}$ vertices of $D(\lambda_r)$ whenever $(2f(v)/p)=-1$ or $f(v)\equiv 0(\text{mod }p)$
		
		\textbf{Claim 3.} $v$ is adjacent to $\frac{p-3}{2}$ vertices of $D(\lambda_r)$ whenever $(2f(v)/p)=1$
		
		Let $D(\lambda_r)$, $u\neq v$. Assume that
		\begin{equation*}
			\beta_v=\bc_{\substack{u\in D(\lambda_r)\\u\neq v}}\{\xi:f(u)+f(v)\equiv\xi(\text{mod }p),\text{ }0\leq \xi<p\}.
		\end{equation*}
		If $f(v)\equiv \theta(\text{mod }p)$ and $2f(v)\equiv \rho(\text{mod }p)$, $0\leq \theta,\rho<p$, then
		\begin{equation*}
			\beta_v=\{1+\theta,2+\theta,\cd,p-1,0,1,\cd,\theta\}-\{\rho\}=\{0\}\cup \{1,2,\cd,p-1\}-\{\rho\}.
		\end{equation*}
		Note that $p$ has $\frac{p-1}{2}$ quadratic residues in $\{1,2,\cd,p-1\}$ according to Theorem~\ref{2.3}. Also, note that $(2f(v)/p)=(\rho/p)$. So, if $(2f(v)/p)=-1$ or $f(v)\equiv 0(\text{mod }p)$, then $p$ has $\frac{p-1}{2}$ quadratic residues in $\beta_v$. This proves Claim 2. Furthermore, if $(2f(v)/p)=1$, then $p$ has $\frac{p-3}{2}$ quadratic residues in $\beta_v$. This proves Claim 3.
		
		\textbf{Claim 4.} $v$ is adjacent to $|\omega^1(v)|$ vertices of $D(\Lambda)$
		
		Let $u\in D(\Lambda)$ and consider the set
		\begin{equation*}
			\gamma_v=\bc_{u\in D(\Lambda)}\{\xi:f(v)+f(u)\equiv \xi(\text{mod }p),\text{ }0\leq \xi<p\}.
		\end{equation*}
		It is evident that $F=\bc_{u\in D(\Lambda)}\{\varpi:f(u)\equiv\varpi(\text{mod }p),\text{ }0\leq \varpi<p\}$. Hence,
		\begin{equation*}
			\gamma_v=\bc_{c\in F}\{\xi:f(v)+c\equiv \xi(\text{mod }p),\text{ }0\leq \xi<p\}.
		\end{equation*}
		Clearly, $\omega^1(v)=\{\varpi:\varpi\in \gamma_v \text{ and }(\varpi/p)=1\}$. This proves Claim 4.
		
		\textbf{Case 3.} Let $qp+1\leq f(v)\leq n$ and suppose that $v\in D(\Lambda)$.
		
		\textbf{Claim 5.} $v$ is adjacent to $|\pi^1(v)|$ vertices of $D(\Lambda)$
		
		Let $u\in D(\Lambda)$, $u\neq v$, and consider the set
		\begin{equation*}
			\eta_v=\bc_{\substack{u\in D(\Lambda)\\u\neq v}}\{\xi:f(v)+f(u)\equiv \xi(\text{mod }p),\text{ }0\leq \xi< p\}.
		\end{equation*}
		Because $F=\bc_{u\in D(\Lambda)}\{\varpi:f(u)\equiv\varpi(\text{mod }p),\text{ }0\leq \varpi<p\}$ as noted in Case 4, we have
		\begin{equation*}
			\eta_v=\bc_{c\in F-\{\varepsilon(v)\}}\{\xi:f(v)+c\equiv \xi(\text{mod }p),\text{ }0\leq \xi< p\}.
		\end{equation*}
		Clearly, $\pi^1(v)=\{\varpi:\varpi\in \eta_v\text{ and }(\varpi/p)=1\}$. This proves Claim 5.
		
		Combining Claims 1 to 5, we obtain (\ref{eq1}) and (\ref{eq2}) for $k=1$. 
		
		For the size of $L_n^1(f,p)$, let
		\begin{equation*}
			A=\sum_{\substack{1\leq f(v)\leq qp\\(2f(v)/p)=1}}\deg(v),
		\end{equation*}
		\begin{equation*}
			B=\sum_{\substack{1\leq f(v)\leq qp\\(2f(v)/p)=-1}}\deg(v),
		\end{equation*}	
		\begin{equation*}
			C=\sum_{qp+1\leq f(v)\leq n}\deg(v),
		\end{equation*}
		and
		\begin{equation*}
			D=\sum_{f(v)\equiv 0(\text{mod }p)}\deg(v).
		\end{equation*}
		So, $\sum_{v\in V(L_n^1(f,p))}\deg(v)=A+B+C+D$.
		
		For each $t=0,1,\cd,q-1$, consider the set
		\begin{equation*}
			\vartheta_t=\bc_{tp+1\leq f(v)\leq tp+p-1}\{\sigma:2f(v)\equiv \sigma(\text{mod }p),\text{ }0\leq \sigma<p\}
		\end{equation*}
		and it follows that
		\begin{equation*}
			\vartheta_t=\{2,4,\cd,p-1,1,3,\cd,p-2\}=\{1,2,\cd,p-1\}.
		\end{equation*} 
		So, $p$ has $\frac{p-1}{2}$ quadratic residues and $\frac{p-1}{2}$ quadratic nonresidues in $\vartheta_t$ in accordance with Theorem~\ref{2.3}, for $t=0,1,\cd,q-1$. Consequently, $p$ has $q\left(\frac{p-1}{2}\right)$ quadratic residues and $q\left(\frac{p-1}{2}\right)$ quadratic nonresidues in the set $\{1,2,\cd,qp\}$. Therefore,
		\begin{equation*}
			\sum_{\substack{1\leq f(v)\leq qp\\(2f(v)/p)=1}}\left[q\left(\frac{p-1}{2}\right)-1\right]=q^2\left(\frac{p-1}{2}\right)^2-q\left(\frac{p-1}{2}\right)
		\end{equation*}
		and
		\begin{equation*}
			\sum_{\substack{1\leq f(v)\leq qp\\(2f(v)/p)=-1}}q\left(\frac{p-1}{2}\right)=q^2\left(\frac{p-1}{2}\right)^2.
		\end{equation*}
		For $A$,
		\begin{equation*}
			A=\sum_{\substack{1\leq f(v)\leq qp\\(2f(v)/p)=1}}\left[q\left(\frac{p-1}{2}\right)-1+|\omega^1(v)|\right]=q^2\left(\frac{p-1}{2}\right)^2-q\left(\frac{p-1}{2}\right)+\sum_{\substack{1\leq f(v)\leq qp\\(2f(v)/p)=1}}|\omega^1(v)|.
		\end{equation*}
		For $B$,
		\begin{equation*}
			B=\sum_{\substack{1\leq f(v)\leq qp\\(2f(v)/p)=-1}}\left[q\left(\frac{p-1}{2}\right)+|\omega^1(v)|\right]=q^2\left(\frac{p-1}{2}\right)^2+\sum_{\substack{1\leq f(v)\leq qp\\(2f(v)/p)=-1}}|\omega^1(v)|.
		\end{equation*}
		This means that
		\begin{equation*}
			A+B=q^2\left(\frac{p-1}{2}\right)^2-q\left(\frac{p-1}{2}\right)+\sum_{\substack{1\leq f(v)\leq qp\\f(v)\not\equiv0(\text{mod }p)}}|\omega^1(v)|.
		\end{equation*}
		For $C$,
		\begin{equation*}
			C=\sum_{qp+1\leq f(v)\leq n}\left[q\left(\frac{p-1}{2}\right)+|\pi^1(v)|\right]=(n-qp)\left[q\left(\frac{p-1}{2}\right)\right]+\sum_{qp+1\leq f(v)\leq n}|\pi^1(v)|.
		\end{equation*}
		For $D$,
		\begin{equation*}
			D=\sum_{f(v)\equiv 0(\text{mod }p)}\left[q\left(\frac{p-1}{2}\right)+|\omega^1(v)|\right]=q\left[q\left(\frac{p-1}{2}\right)\right]+\sum_{f(v)\equiv 0(\text{mod }p)}|\omega^1(v)|.
		\end{equation*}
		
		Consequently,
		\begin{align*}
			\sum_{v\in V(L_n^1(f,p))}\deg(v)&=q\left(\frac{p-1}{2}\right)\left[2q\left(\frac{p-1}{2}\right)-1+n-qp+q\right]+\sum_{1\leq f(v)\leq qp}|\omega^1(v)|\\
			&\text{\hspace{5mm}}+\sum_{qp+1\leq f(v)\leq n}|\pi^1(v)|.
		\end{align*}
		\textbf{Claim 6.} $$\sum_{1\leq f(v)\leq qp}|\omega^1(v)|=q(n-qp)\left(\frac{p-1}{2}\right)$$
		
		Let $v\in V(L_n^1(f,p))$ with $tp+1\leq f(v)\leq tp+p$ for $t=0,1,\cd,q-1$. Define the set
		\begin{equation*}
			Z_v=\bc_{c\in F}\{\varrho:f(v)+c\equiv \varrho(\text{mod }p),\text{ }0\leq \varrho<p\}.
		\end{equation*}
		Observe that $\omega^1(v)\su Z_v$. Let $f(v)\equiv \xi(\text{mod }p)$ and $f(v)+c\equiv \varrho(\text{mod }p)$, $0\leq \xi,\varrho<p$. Consider Table~\ref{rho}.
		\begin{table}[!hbt]
			\centering
			\begin{tabular}{|c|c|c|c|c|c|c|c|c|}
				\hline
				\color{blue}$\xi$  \color{black}$|$  
				\color{red}$c$&\color{red}1&\color{red}2&\color{red}3&\color{red}$\cd$&\color{red}$n-qp$\\
				\hline
				\color{blue}1&2&3&4&$\cd$&$n-qp+1$\\
				\hline
				\color{blue}2&3&4&5&$\cd$&$n-qp+2$\\
				\hline
				\color{blue}3&4&5&6&$\cd$&$n-qp+3$\\
				\hline
				\color{blue}$\vdots$&$\vdots$&$\vdots$&$\vdots$&$\vdots$&$\vdots$\\
				\hline
				\color{blue}$(q+1)p-n-1$&$(q+1)p-n$&$(q+1)p-n+1$&$(q+1)p-n+2$&$\cd$&$p-1$\\
				\hline
				\color{blue}$(q+1)p-n$&$(q+1)p-n+1$&$(q+1)p-n+2$&$(q+1)p-n+3$& $\cd$&$0$\\
				\hline
				\color{blue}$(q+1)p-n+1$&$(q+1)p-n+2$&$(q+1)p-n+3$&$(q+1)p-n+4$& $\cd$&$1$\\
				\hline
				\color{blue}$\vdots$&$\vdots$&$\vdots$&$\vdots$&$\vdots$&$\vdots$\\
				\hline
				\color{blue}$p-2$&$p-1$&0&1&$\cd$&$n-qp-2$\\
				\hline
				\color{blue}$p-1$&0&1&2&$\cd$&$n-qp-1$\\
				\hline
				\color{blue}$p$&1&2&3&$\cd$&$n-qp$\\
				\hline
			\end{tabular}
			\caption{Values of $\varrho$}\label{rho}
		\end{table}
		
		The rows of Table~\ref{rho} represent the set $Z_v$. Also, notice that the columns produce the set $\{0\}\cup\{1,2,\cd,p-1\}$. Because $p$ has $\frac{p-1}{2}$ quadratic residues in $\{1,2,\cd,p-1\}$ in accordance with Theorem~\ref{2.3}, we have
		\begin{equation*}
			\sum_{tp+1\leq f(v)\leq tp+p}\omega^1(v)=(n-qp)\left(\frac{p-1}{2}\right)
		\end{equation*}
		for $t=0,1,\cd,q-1$. This proves Claim 6. 
		
		Therefore, 
		\begin{align*}
			\sum_{v\in V(L_n^1(f,p))}\deg(v)&=q\left(\frac{p-1}{2}\right)\left[2q\left(\frac{p-1}{2}\right)-1+n-qp+q+n-qp\right]+\sum_{qp+1\leq f(v)\leq n}|\pi^1(v)|\\
			&=q\left(\frac{p-1}{2}\right)\left(qp-q+2n-2qp+q-1\right)+\sum_{qp+1\leq f(v)\leq n}|\pi^1(v)|\\
			&=\frac{q(p-1)(2n-qp-1)}{2}+\sum_{qp+1\leq f(v)\leq n}|\pi^1(v)|.
		\end{align*}
		Finally, by fundamental theorem of graph theory, we have
		\begin{equation*}
			|E(L_n^1(f,p))|=\frac{q(p-1)(2n-qp-1)}{4}+\frac{1}{2}\sum_{qp+1\leq f(v)\leq n}|\pi^1(v)|.
		\end{equation*}
		For $k=-1$, the approach is analogous.
		
		\textbf{Claim 7.} 
		$$\sum_{qp+1\leq f(v)\leq n}|\pi^k(v)|=\frac{1}{2}[(n-qp)(n-qp-1)-\psi+k(\mathcal{S}_1+\mathcal{S}_2)]$$
		where $\psi=\max\{0,2(n-qp)-p+1\}$, and
		\begin{equation*}
		\mathcal{S}_1=\sum_{\substack{s=2\\s\neq p}}^{n-qp+1}(s-1-\delta_s)(s/p)\text{ and }\mathcal{S}_2=\sum_{\substack{s=n-qp+2\\s\neq p}}^{2(n-qp)}[2(n-qp)-s+1-\delta_s](s/p)
		\end{equation*}	
		given that
		\begin{equation*}
			\delta_s=\begin{cases}
				1&\text{ if $s$ is even}\\
				0&\text{ otherwise}
			\end{cases}
		\end{equation*}
		Let $T=\sum_{qp+1\leq f(v)\leq n}|\pi^k(v)|$. Then
		\begin{equation*}
			T=\sum_{qp+1\leq f(v)\leq n}\sum_{\substack{c\in F\\c\neq \varepsilon(v)\\f(v)+c\not\equiv0(\text{mod }p)}}\frac{1+k([f(v)+c]/p)}{2}.
		\end{equation*}
		The expression of $T$ above is true since
		\begin{equation*}
			\frac{1+k([f(v)+c]/p)}{2}=\begin{cases}
				0&\text{ if }([f(v)+c]/p)=-k\\
				1&\text{ if }([f(v)+c]/p)=k
			\end{cases}
		\end{equation*}
		and it follows from the definition of $\pi^k(v)$.
		
		Because $f(v)\equiv \varepsilon(v)(\text{mod }p)$, we have 
		\begin{equation*}
			T=\sum_{1\leq \varepsilon(v)\leq n-qp}\sum_{\substack{c\in F\\c\neq \varepsilon(v)\\\varepsilon(v)+c\not\equiv0(\text{mod }p)}}\frac{1+k([\varepsilon(v)+c]/p)}{2}.
		\end{equation*}
		Let $i=\varepsilon(v)$. Then it follows that
		\begin{align*}
			T&=\sum_{i=1}^{n-qp}\sum_{\substack{c=1\\c\neq i\\i+c\not\equiv0(\text{mod }p)}}^{n-qp}\frac{1+k([i+c]/p)}{2}\\
			&=\frac{1}{2}\sum_{i=1}^{n-qp}\sum_{\substack{c=1\\c\neq i\\i+c\not\equiv0(\text{mod }p)}}^{n-qp}[1+k([i+c]/p)]\\
			&=\frac{1}{2}\left[\sum_{i=1}^{n-qp}\sum_{\substack{c=1\\c\neq i\\i+c\not\equiv0(\text{mod }p)}}^{n-qp}1+k\sum_{i=1}^{n-qp}\sum_{\substack{c=1\\c\neq i\\i+c\not\equiv0(\text{\normalfont{mod} }p)}}^{n-qp}([i+c]/p).\right]
		\end{align*}
		\textbf{Subclaim 7.1.} 
		\begin{equation*}
			\sum_{i=1}^{n-qp}\sum_{\substack{c=1\\c\neq i\\i+c\not\equiv0(\text{mod }p)}}^{n-qp}1=(n-qp)(n-qp-1)-\max\{0,2(n-qp)-p+1\}
		\end{equation*}
		
		Observe that
		\begin{align*}
			\sum_{i=1}^{n-qp}\sum_{\substack{c=1\\c\neq i\\i+c\not\equiv0(\text{mod }p)}}^{n-qp}1&=\sum_{i=1}^{n-qp}\sum_{\substack{c=1\\c\neq i}}^{n-qp}1-\sum_{i=1}^{n-qp}\sum_{\substack{c=1\\c\neq i\\i+c\equiv0(\text{mod }p)}}^{n-qp}1\\
			&=(n-qp)(n-qp-1)-\sum_{i=1}^{n-qp}\sum_{\substack{c=1\\c\neq i\\i+c\equiv0(\text{mod }p)}}^{n-qp}1.
		\end{align*}
		
		If $i+c\equiv 0(\text{mod }p)$, then $c\equiv (p-i)(\text{mod }p)$. Assume that $p-i> n-qp$. So, $i<p-(n-qp)$. Since $c\leq n-qp$, we have $i+c<p$. We have a contradiction because  $i+c\equiv 0(\text{mod }p)$. Thus, $p-i\leq n-qp$. Note that $i\leq n-qp$. So, $p-(n-qp)\leq i\leq n-qp$. The condition $i\neq c$ is always satisfied because $2i\not\equiv0(\text{mod }p)$. Hence, there are $n-qp-p+(n-qp)+1=2(n-qp)-p+1$ pairs $(i,c)$ such that $i+c\equiv 0(\text{mod }p)$. Therefore,
		\begin{equation*}
			\sum_{i=1}^{n-qp}\sum_{\substack{c=1\\c\neq i\\i+c\equiv0(\text{mod }p)}}^{n-qp}1=\begin{cases}
				2(n-qp)-p+1&\text{ if }p-(n-qp)\leq n-qp\\
				0&\text{ otherwise}.
			\end{cases}
		\end{equation*}
		Clearly,
		\begin{equation*}
			\max\{0,2(n-qp)-p+1\}=\sum_{i=1}^{n-qp}\sum_{\substack{c=1\\c\neq i\\i+c\equiv0(\text{mod }p)}}^{n-qp}1.
		\end{equation*}
		This proves Subclaim 7.1. 
		
		\textbf{Subclaim 7.2.} 
		\begin{equation*}
			\sum_{i=1}^{n-qp}\sum_{\substack{c=1\\c\neq i\\i+c\not\equiv0(\text{\normalfont{mod} }p)}}^{n-qp}([i+c]/p)=\sum_{\substack{s=2\\s\neq p}}^{n-qp+1}(s-\delta_s)(s/p)+\sum_{\substack{s=n-qp+2\\s\neq p}}^{2(n-qp)}[2(n-qp)-s+1-\delta_s](s/p)
		\end{equation*}
		We use the idea of reindexing by setting $s=i+c$. Since $1\leq i,c\leq n-qp$, we have $2\leq s\leq 2(n-qp)$. The condition $i+c\not\equiv 0(\text{mod }p)$ implies $s\neq p$. Consider the ordered pair $(i,c)$ and let 
		\begin{equation*}
			A_s=\{(i,c):1\leq i,c\leq n-qp, \text{ }s=i+c\}.
		\end{equation*}
		Suppose that $\eta_s=|A_s|$. Observe that $1\leq s-i\leq n-qp$, that is, $s-(n-qp)\leq i\leq s-1$ since $c=s-i$. Thus, $\max\{1,s-(n-qp)\}\leq i\leq \min\{n-qp,s-1\}$ and it follows that
		$$\eta_s=\min\{n-qp,s-1\}-\max\{1,s-(n-qp)\}+1.$$ 
		In addition, if $i\neq c$, then $s\neq 2i$, that is, $s$ is not even. So, we need to remove the ordered pair $(i,c)$ with $c=i$ from $A_s$ which corresponds to $s=2i$ and it will only occur if $s$ is even. With this, we need the expression $\eta_s-\delta_s$ where
			$$\delta_s=\begin{cases}
			1 &\text{ if $s$ is even}\\
			0&\text{ otherwise}.
		\end{cases}$$
		Now, each ordered pair $(i,c)$ contributes to $([i+c]/p)=(s/p)$ except when $i=c$ ($s$ is even) and $i+c\equiv 0(\text{mod }p)$ ($s=p$). Thus, 
		\begin{equation*}
			\sum_{i=1}^{n-qp}\sum_{\substack{c=1\\c\neq i\\i+c\not\equiv0(\text{\normalfont{mod} }p)}}^{n-qp}([i+c]/p)=\sum_{\substack{s=2\\s\neq p}}^{2(n-qp)}(\eta_s-\delta_s)(s/p).
		\end{equation*}
		Furthermore, it is immediate that
		\begin{equation*}
			\eta_s=\begin{cases}
				s-1&\text{ if }2\leq s\leq n-qp+1\\
				2(n-qp)-s+1&\text{ if }n-qp+2\leq s\leq 2(n-qp).
			\end{cases}
		\end{equation*}
		This proves Subclaim 7.2.
		
		Combining Subclaims 7.1 and 7.2, we obtain Claim 7.
		
		This completes the proof of Lemma~\ref{3.2}. 
	\end{proof}
	
	\begin{pro}
		If $q$ is a positive integer, then $\delta(L_{qp}^k(f,p))=q\left(\frac{p-1}{2}\right)-1$ and $\Delta(L_{qp}^k(f,p))=q\left(\frac{p-1}{2}\right)$.
	\end{pro}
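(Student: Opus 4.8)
The plan is to specialize Lemma~\ref{3.2} to the case $n=qp$ and then check that the two degree values it produces are both attained. First I would observe that when $n=qp$ we have $\left\lfloor\frac{n}{p}\right\rfloor=q$ and, crucially, $n-qp=0$, so the index set $F=\{1,2,\cd,n-qp\}$ is empty and the set $\Lambda=\{qp+1,\cd,n\}$ is empty as well. Hence no vertex $v$ satisfies $qp+1\leq f(v)\leq n$, so equation~(\ref{eq2}) never applies, and every vertex $v$ satisfies $1\leq f(v)\leq qp$ with $\omega^k(v)=\bc_{c\in F}\{\cd\}=\emptyset$, so that $|\omega^k(v)|=0$.

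Feeding this into equation~(\ref{eq1}), for every $v\in V(L_{qp}^k(f,p))$ we obtain
\begin{equation*}
\deg(v)=\begin{cases} q\left(\frac{p-1}{2}\right)-1 & \text{if }(2f(v)/p)=k,\\ q\left(\frac{p-1}{2}\right) & \text{if }(2f(v)/p)=-k\text{ or }f(v)\equiv 0\ (\text{mod }p).\end{cases}
\end{equation*}
This already gives $\delta(L_{qp}^k(f,p))\geq q\left(\frac{p-1}{2}\right)-1$ and $\Delta(L_{qp}^k(f,p))\leq q\left(\frac{p-1}{2}\right)$, so the remaining task is to exhibit vertices realizing each value.

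For this I would use that $f$ is a bijection onto $\{1,\cd,qp\}\supseteq\{1,\cd,p-1\}$ (valid since $q\geq 1$), so it suffices to find $m,m'\in\{1,\cd,p-1\}$ with $(2m/p)=k$ and $(2m'/p)=-k$. Since multiplication by $2$ permutes the nonzero residues modulo $p$, the set $\{2m\bmod p:1\leq m\leq p-1\}$ equals $\{1,2,\cd,p-1\}$, which by Theorem~\ref{2.3} contains exactly $\frac{p-1}{2}\geq 1$ quadratic residues and $\frac{p-1}{2}\geq 1$ quadratic nonresidues of $p$. Therefore such $m,m'$ exist: the vertex labeled $m$ has degree $q\left(\frac{p-1}{2}\right)-1$ and the vertex labeled $m'$ has degree $q\left(\frac{p-1}{2}\right)$. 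Combining the two bounds with these witnesses yields $\delta(L_{qp}^k(f,p))=q\left(\frac{p-1}{2}\right)-1$ and $\Delta(L_{qp}^k(f,p))=q\left(\frac{p-1}{2}\right)$.

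The only genuinely delicate point — and thus where I would be most careful — is the non-vacuousness in the last step: one must ensure that \emph{both} branches of~(\ref{eq1}) are actually realized by some label in $\{1,\cd,qp\}$, which is exactly what the count $\frac{p-1}{2}\geq 1$ (valid because $p$ is an odd prime, hence $p\geq 3$) guarantees. Everything else is a direct substitution into Lemma~\ref{3.2}.
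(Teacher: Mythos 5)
Your proposal is correct and follows essentially the same route as the paper, which simply states that the result ``immediately follows from Lemma~\ref{3.2}~(\ref{eq1})''; you have merely filled in the specialization $n=qp$ (so $F=\emptyset$ and $|\omega^k(v)|=0$) that the paper leaves implicit. Your added check that both branches of~(\ref{eq1}) are actually realized — via the fact that $m\mapsto 2m$ permutes the nonzero residues and Theorem~\ref{2.3} gives $\frac{p-1}{2}\geq 1$ residues of each type — is a worthwhile detail the paper omits.
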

	
	\begin{proof}
		Immediately follows from Lemma~\ref{3.2} (\ref{eq1}).
	\end{proof}
	
	\begin{thm}\label{3.3}
		Let $K_n$ be a complete graph of order $n$ and let $q=\left\lfloor\frac{n}{p}\right\rfloor$. Suppose that $\psi=\max\{0,2(n-qp)-p+1\}$ and let $\mathcal{S}=\mathcal{S}_1+\mathcal{S}_2$ where\\
		\begin{equation*}
			\mathcal{S}_1=\sum_{\substack{s=2\\s\neq p}}^{n-qp+1}(s-1-\delta_s)(s/p)\text{ and }\mathcal{S}_2=\sum_{\substack{s=n-qp+2\\s\neq p}}^{2(n-qp)}[2(n-qp)-s+1-\delta_s](s/p)
		\end{equation*}	
		given that
		\begin{equation*}
			\delta_s=\begin{cases}
				1&\text{ if $s$ is even}\\
				0&\text{ otherwise}.
			\end{cases}
		\end{equation*}
		Then $K_n$ is a Legendre cordial graph modulo $p$ if and only if
		\begin{equation}\label{eq3}
			\mathcal{S}=2nq-pq^2-q+\psi
		\end{equation}
		or
		\begin{equation}\label{eq4}
			\mathcal{S}=2nq-pq^2-q+\psi\pm 2.
		\end{equation}
	\end{thm}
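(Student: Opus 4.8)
The plan is to recognize that Theorem~\ref{3.3} is essentially a corollary of Lemma~\ref{3.2}. The crucial observation is that in $K_n$ every two distinct vertices are adjacent, so an edge $uv$ receives label $1$ under $f_p^*$ precisely when $([f(u)+f(v)]/p)=1$; that is, the set of edges labeled $1$ is exactly the edge set of the Legendre graph $L_n^1(f,p)$ on the same vertex set. Hence $e_{f_p^*}(1)=|E(L_n^1(f,p))|$ and, since $e_{f_p^*}(0)+e_{f_p^*}(1)=\frac{n(n-1)}{2}$, also $e_{f_p^*}(0)=\frac{n(n-1)}{2}-|E(L_n^1(f,p))|$.

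First I would invoke Lemma~\ref{3.2} with $k=1$ to get the closed form $|E(L_n^1(f,p))|=\frac{1}{4}\left[n^2-n-2nq+pq^2+q-\psi+\mathcal{S}\right]$, where $\mathcal{S}=\mathcal{S}_1+\mathcal{S}_2$. The key point here — the reason the statement can be phrased as an ``if and only if'' purely in terms of $n$ and $p$ — is that this size does not depend on the particular bijection $f$, only on $n$, $p$, $q$, $\psi$, and $\mathcal{S}$. Consequently $e_{f_p^*}(1)$ and $e_{f_p^*}(0)$ take the same value for every labeling $f$ of $K_n$.

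Next I would compute the difference. Substituting the formula above gives
\begin{align*}
e_{f_p^*}(0)-e_{f_p^*}(1)&=\frac{n(n-1)}{2}-2\cdot\frac{1}{4}\left[n^2-n-2nq+pq^2+q-\psi+\mathcal{S}\right]\\
&=\frac{1}{2}\left[2nq-pq^2-q+\psi-\mathcal{S}\right].
\end{align*}
Since the left-hand side is an integer for any labeling, $2nq-pq^2-q+\psi-\mathcal{S}$ is even. Therefore the cordiality condition $|e_{f_p^*}(0)-e_{f_p^*}(1)|\leq 1$ is equivalent to $e_{f_p^*}(0)-e_{f_p^*}(1)\in\{-1,0,1\}$, i.e. to $2nq-pq^2-q+\psi-\mathcal{S}\in\{-2,0,2\}$, which is precisely condition (\ref{eq3}) or (\ref{eq4}). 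Because this quantity is independent of $f$, the existence of a Legendre cordial labeling of $K_n$ modulo $p$ is equivalent to this numeric condition, which yields both directions of the equivalence simultaneously.

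The argument is therefore almost entirely bookkeeping, and I expect no genuine obstacle once Lemma~\ref{3.2} is in hand. The only points needing care are (i) the identification of the label-$1$ edge set of $K_n$ with $E(L_n^1(f,p))$, so that the size formula from Lemma~\ref{3.2} applies verbatim, and (ii) the observation that $2nq-pq^2-q+\psi-\mathcal{S}$ is automatically even, which is what upgrades the inequality $|\,\cdot\,|\le 2$ to membership in the three-element set $\{-2,0,2\}$ and thus matches the stated equations exactly.
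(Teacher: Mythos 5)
Your proposal is correct and follows essentially the same route as the paper: identify the label-$1$ edges of $K_n$ with $E(L_n^1(f,p))$, apply the size formula from Lemma~\ref{3.2}, and observe that the resulting difference $e_{f_p^*}(0)-e_{f_p^*}(1)=\frac{1}{2}[2nq-pq^2-q+\psi-\mathcal{S}]$ is independent of the bijection $f$. Your explicit parity remark (that $2nq-pq^2-q+\psi-\mathcal{S}$ must be even, so the bound $|\cdot|\leq 2$ tightens to membership in $\{-2,0,2\}$) is a small clarification the paper leaves implicit, but it does not change the argument.
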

	
	\begin{proof}
		Let $f:V(K_n)\to \{1,2,\cd,n\}$ be an arbitrary bijective function and let $L_n^k(f,p)$ be a Legendre graph for $k=-1,1$. Suppose that $V(K_n)=V(L_n^1(f,p))=V(L_n^{-1}(f,p))$. So, $L_n^1(f,p)$ and $L_n^{-1}(f,p)$ are subgraphs of $K_n$. Also, let $G$ be a subgraph of $K_n$ with vertex set $V(G)=V(K_n)$ and edge set $$E(G)=E(K_n)-(E(L_n^1(f,p))\cup E(L_n^{-1}(f,p))).$$
		Now, it is clear that the edges of $L_n^{-1}(f,p)$ and $L_n^1(f,p)$ are labeled by 0 and 1, respectively, under $f_p^*$. Additionally, observe that for any $ab\in E(G)$, $f(a)+f(b)\equiv 0(\text{mod }p)$, which means all edges of $G$ are labeled by 0 under $f_p^*$. This implies that
		\begin{align*}
			e_{f_p^*}(0)&=|E(L_n^{-1}(f,p))|+|E(G)|\\
			&=|E(L_n^{-1}(f,p))|+|E(K_n)|-(|E(L_n^{-1}(f,p))|+|E(L_n^{1}(f,p))|)\\
			&=|E(K_n)|-|E(L_n^{1}(f,p))|
		\end{align*}
		and $e_{f_p^*}(1)=|E(L_n^{1}(f,p))|$. Note that the size of $K_n$ is $\frac{n(n-1)}{2}$. By Lemma~\ref{3.2},
		\begin{align*}
			e_{f_p^*}(0)-e_{f_p^*}(1)&=|E(K_n)|-2|E(L_n^{1}(f,p))|\\
			&=\frac{n(n-1)}{2}-2\left[\frac{1}{4}(n^2-n-2nq+pq^2+q-\psi+\mathcal{S})\right]\\
			&=\frac{1}{2}[2nq-pq^2-q+\psi]-\frac{1}{2}\mathcal{S}
		\end{align*}
		for all bijective function $f$.
		
		If $K_n$ is a Legendre cordial graph, then  $|e_{f_p^*}(0)-e_{f_p^*}(1)|\leq 1$, and so (\ref{eq3}) and (\ref{eq4}) follow. 
		
		The converse is clear.
	\end{proof}
	The following corollary is a direct consequence of Theorem~\ref{3.3}.
	\begin{cor}\label{3.4}
		The complete graph $K_{qp}$, where $q$ is a positive integer, is a Legendre cordial graph modulo $p$ if and only if $q=1$ and $p=3$.
	\end{cor}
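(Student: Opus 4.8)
The plan is to specialize Theorem~\ref{3.3} to the case $n=qp$ and then solve the resulting arithmetic condition. First I would record what happens to each quantity in Theorem~\ref{3.3} when $n=qp$. Since $n-qp=0$ and $p\geq 3$, we get $\psi=\max\{0,2\cdot 0-p+1\}=\max\{0,1-p\}=0$. Moreover the sum defining $\mathcal{S}_1$ runs over $s$ from $2$ up to $n-qp+1=1$ and the sum defining $\mathcal{S}_2$ runs over $s$ from $n-qp+2=2$ up to $2(n-qp)=0$; both index ranges are empty, so $\mathcal{S}_1=\mathcal{S}_2=0$ and hence $\mathcal{S}=0$.

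Next I would substitute these values into conditions (\ref{eq3}) and (\ref{eq4}). With $n=qp$ the right-hand side becomes
\begin{equation*}
	2nq-pq^2-q+\psi=2pq^2-pq^2-q=pq^2-q=q(pq-1).
\end{equation*}
Therefore, by Theorem~\ref{3.3}, $K_{qp}$ is a Legendre cordial graph modulo $p$ if and only if $q(pq-1)=0$ or $q(pq-1)=\pm 2$, that is, if and only if $q(pq-1)\in\{-2,0,2\}$.

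Finally I would finish with a short Diophantine observation. Since $q\geq 1$ and $p\geq 3$, we have $pq-1\geq 2>0$, so $q(pq-1)\geq 2$, with equality forcing $q=1$ and $pq-1=2$, i.e. $p=3$ (and if $q\geq 2$ then $q(pq-1)\geq 2(2\cdot 3-1)=10>2$). Hence the displayed condition holds precisely when $q=1$ and $p=3$, which is exactly the statement of the corollary.

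I do not expect a genuine obstacle here: the entire argument is a specialization of Theorem~\ref{3.3} followed by one elementary inequality. The only points requiring care are verifying that the index ranges of $\mathcal{S}_1$ and $\mathcal{S}_2$ are truly empty when $n=qp$ (so that $\mathcal{S}=0$) and that $\psi=0$ in this regime; once these are in place the conclusion is immediate.
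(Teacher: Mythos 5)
Your proposal is correct and matches the paper's intent exactly: the paper offers no written proof beyond declaring the corollary a direct consequence of Theorem~\ref{3.3}, and your specialization to $n=qp$ (giving $\psi=0$, empty ranges for $\mathcal{S}_1$ and $\mathcal{S}_2$, hence $\mathcal{S}=0$, and the condition $q(pq-1)\in\{-2,0,2\}$) together with the closing inequality is precisely the omitted verification.
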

	
	\newpage
	
	\tab The following algorithm is created to determine whether a complete graph admits a Legendre cordial labeling modulo $p$ based on Theorem~\ref{3.3}.

		\begin{algorithmic}[1]
			\STATE \textbf{Input:} $n$ (order of $K_n$), $p$ (odd prime)
			
			\STATE \textbf{function} $\texttt{LegendreSymbol}(a,p)$ (\textit{Euler's Criterion}\cite{Burton})
			\STATE \quad $r \gets a^{\frac{p-1}{2}} \bmod p$ 
			\STATE \quad \textbf{if} $r=1$
			\STATE \quad \quad return $1$
			\STATE \quad \textbf{else}
			\STATE \quad \quad return $-1$
			\STATE \quad \textbf{end if}
			\STATE \textbf{end function}
			
			\STATE \textbf{function} $\delta(s)$
			\STATE \quad $t\gets s \bmod 2$
			\STATE \quad \textbf{if} $t=0$
			\STATE \quad \quad return $1$
			\STATE \quad \textbf{else}
			\STATE \quad \quad return $0$
			\STATE \quad \textbf{end if}
			\STATE \textbf{end function}
			
			\STATE $q \gets \lfloor n / p \rfloor$
			\STATE $\psi \gets \max(0, 2 \cdot (n - qp) - p + 1)$
			
			\STATE $S_1 \gets 0$
			\FOR{$s = 2$ to $n - qp + 1$}
			\IF{$s \neq p$}
			\STATE $L \gets \texttt{LegendreSymbol}(s, p)$
			\STATE $S_1 \gets S_1 + (s - 1 - \delta(s)) \cdot L$
			\ENDIF
			\ENDFOR
			
			\STATE $S_2 \gets 0$
			\FOR{$s = n - qp + 2$ to $2 \cdot (n - qp)$}
			\IF{$s \neq p$}
			\STATE $L \gets \texttt{LegendreSymbol}(s, p)$
			\STATE $S_2 \gets S_2 + (2 \cdot (n - qp) - s + 1 - \delta(s)) \cdot L$
			\ENDIF
			\ENDFOR
			
			\STATE $S \gets S_1 + S_2$
			\STATE \textbf{Check for Legendre cordiality:}
			\STATE $T\gets 2nq-pq^2-q+\psi$
			
			\IF{$S = T$ \OR $S = T \pm 2$}
			\STATE \texttt{LegendreCordial} $\gets$ true
			\ELSE
			\STATE \texttt{LegendreCordial} $\gets$ false
			\ENDIF
			
			\STATE \textbf{Output:} \texttt{LegendreCordial}
		\end{algorithmic}
		
	\tab Suppose that $\mathbb{J}(n,m) = \{p:\text{$K_n$ is a Legendre cordial graph modulo $p$, }p\leq m\}$ and let
	$J(n,m)=|\mathbb{J}(n,m)|$. It is clear that $J(n,m_1)\leq J(n,m_2)$ whenever $m_1\leq m_2$. As a consequence, if $\lim_{n\to \infty} J(n,m_1)=0$, then for any integer $m_2\leq m_1$, we have $\lim_{n\to \infty} J(n,m_2)=0$. Furthermore, it is obvious that $J(2,m)=\pi(m)-1$ where $\pi(m)$ denotes the number of primes not exceeding $m$.
	
	\begin{figure}[hbt!]
		\centering
		\includegraphics[height=3.3in,width=6.15in]{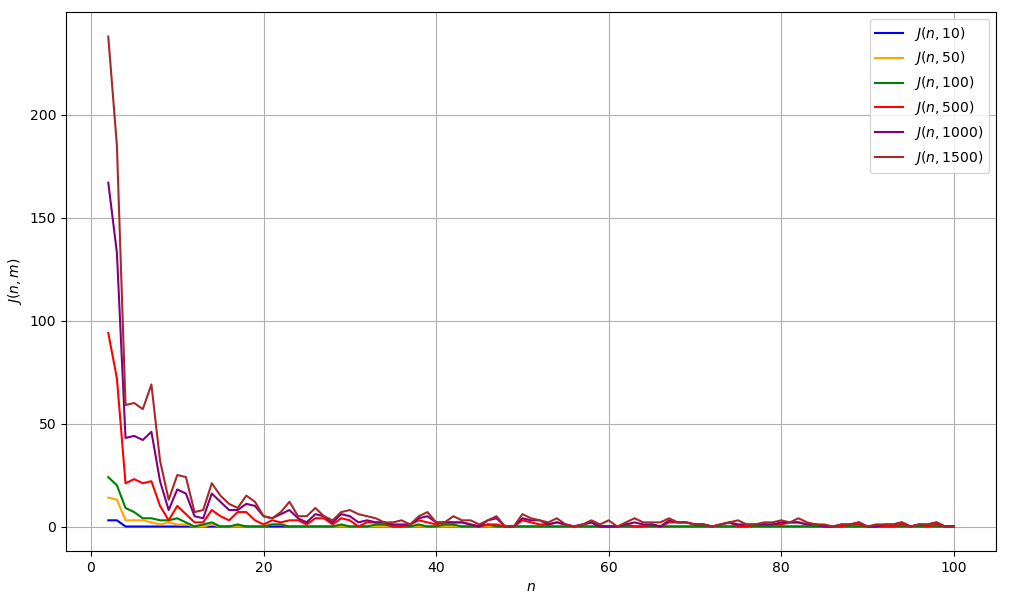}
		\caption{Line plot of $J(n,m)$ for different $m$, $2\leq n\leq 100$}\label{fig1}
	\end{figure}

	\begin{figure}[hbt!]\centering
		\includegraphics[height=3.3in,width=6in]{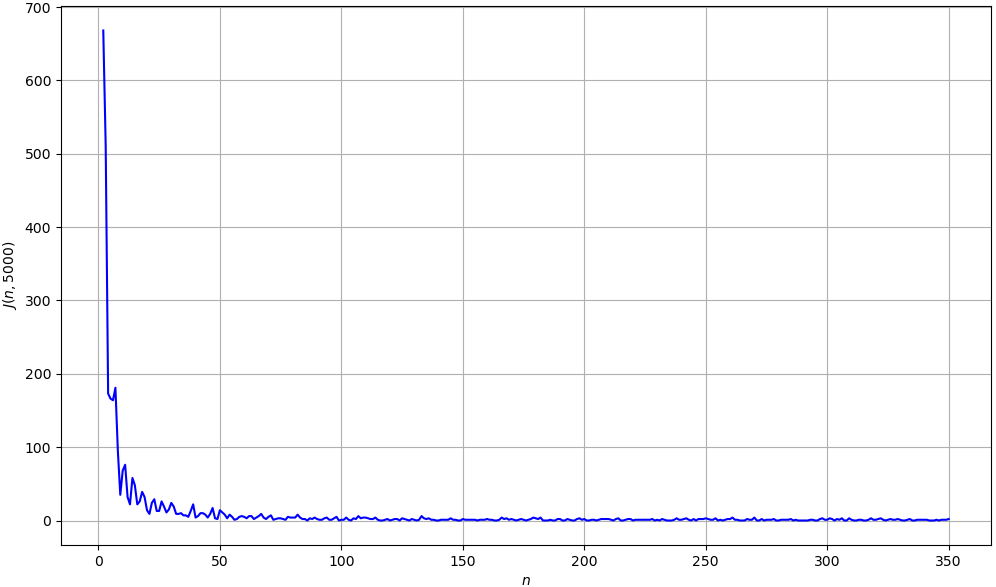}
		\caption{Line plot of $J(n,5000)$ for $2\leq n\leq 350$}\label{fig3}
	\end{figure}
	
	\tab Figure~\ref{fig1} illustrates the behavior of $J(n,m)$ for $m = 10, 50, 100, 500, 1000, 1500$ where $2\leq n\leq 100$. This plot was obtained using the algorithm provided above. The values of $J(n,m)$ fluctuate but show an overall decreasing tendency as $n$ increases. The computational evidence therefore indicates that the number of primes for which Legendre cordial labeling exists becomes very small as the order of complete graph grows.
	
	\tab In addition, it is also worth noting that the same overall trend persists when the computations are extended beyond $n = 100$ and $m=1500$. In particular, as shown in Figure~\ref{fig3}, for $2 \leq n \leq 350$ and $m=5000$, the values of $J(n,m)$ continue to show a downward tendency, confirming the consistency of the observed pattern across larger graph orders, further supporting the observed trend.
	
	\tab Although these figures do not establish a proof, they provide strong empirical support for a broader trend: as the order of the complete graph increases, the likelihood of admitting Legendre cordial labeling diminishes. This observed behavior, together with the theoretical restrictions established in Corollary~\ref{3.4}, motivates the formulation of a general conjecture, stated below.
	
	\begin{con}\label{3.6}
		For any integer $m$, $\lim_{n\to \infty} J(n,m)=0$.
	\end{con}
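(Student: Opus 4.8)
## Proof Proposal for Conjecture~\ref{3.6}

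The plan is to fix an odd prime $p$ and show that the set of $n$ for which $K_n$ is Legendre cordial modulo $p$ has density zero (indeed is finite for each $p$), and then to observe that $\mathbb{J}(n,m)$ for fixed $m$ draws only from the finitely many primes $p\le m$, so the conjecture would follow if each of those finitely many primes eventually drops out. First I would reduce everything to the condition in Theorem~\ref{3.3}. Write $r=n-qp$, so $0\le r\le p-1$, and note that as $n\to\infty$ with $p$ fixed we have $q\to\infty$. The right-hand side of (\ref{eq3})--(\ref{eq4}) is $2nq-pq^2-q+\psi = q(2n-pq-1)+\psi = q(2r + pq - 1)+\psi$ using $n=qp+r$; this grows like $pq^2$, i.e.\ quadratically in $q$. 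On the other hand $\mathcal S=\mathcal S_1+\mathcal S_2$ depends only on $r$ (not on $q$ at all, since the summation ranges $2\le s\le n-qp+1$ and $n-qp+2\le s\le 2(n-qp)$ are governed entirely by $r=n-qp$), so $\mathcal S$ is a bounded quantity: there are only $p$ possible values of $r$, hence $|\mathcal S|\le C_p$ for some constant $C_p$ depending only on $p$.

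The key step is then a size/growth comparison. For fixed $p$, the left side $\mathcal S$ is bounded by $C_p$ while the right side $q(2r+pq-1)+\psi$ tends to $+\infty$ as $q\to\infty$ (uniformly over the finitely many residues $r$), so the equations $\mathcal S = q(2r+pq-1)+\psi$ and $\mathcal S = q(2r+pq-1)+\psi\pm 2$ can hold for at most finitely many pairs $(q,r)$, hence for at most finitely many $n$. Concretely, once $q$ is large enough that $q(pq-1) > C_p + 2 + \psi_{\max}$ (where $\psi_{\max}\le p-1$), no solution exists. Therefore for each fixed odd prime $p$ there is an $N_p$ such that $K_n$ is not Legendre cordial modulo $p$ for all $n > N_p$.

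To conclude: fix $m$ and let $p_1<p_2<\cdots<p_t$ be the odd primes $\le m$ (the prime $2$ is excluded since $p$ must be odd). For $n > \max\{N_{p_1},\dots,N_{p_t}\}$, none of these primes lies in $\mathbb{J}(n,m)$, so $J(n,m)=0$; in particular $\lim_{n\to\infty}J(n,m)=0$. The main obstacle — and the one place the argument must be carried out with care rather than waved through — is the claim that $\mathcal S$ is bounded by a constant depending only on $p$: one must check that neither the number of nonzero terms in $\mathcal S_1,\mathcal S_2$ nor the coefficients $(s-1-\delta_s)$ and $[2(n-qp)-s+1-\delta_s]$ secretly depend on $q$. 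Since $0\le r = n-qp\le p-1$ always, the index $s$ ranges over $2\le s\le 2r\le 2(p-1)$ and each coefficient is at most $r\le p-1$ in absolute value, giving the crude bound $|\mathcal S|\le 2(p-1)\cdot(p-1) = 2(p-1)^2$; this suffices. One should also verify the harmless edge cases ($r=0$, i.e.\ $p\mid n$, already handled by Corollary~\ref{3.4}; and small $q$), but these do not affect the limiting statement.
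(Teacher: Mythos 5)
Your proposal is correct, and it is worth emphasizing that the paper itself offers \emph{no} proof of this statement: it is stated only as a conjecture, supported by the computational plots in Figures~\ref{fig1} and \ref{fig3} and by Corollary~\ref{3.4}. Your argument, which takes Theorem~\ref{3.3} as its starting point, actually settles the conjecture. The two key observations are both verified correctly: (i) writing $n=pq+r$ with $0\le r\le p-1$, the quantities $\psi$, $\mathcal{S}_1$ and $\mathcal{S}_2$ are functions of $(r,p)$ alone (the summation limits $n-qp+1$ and $2(n-qp)$ and the coefficients $s-1-\delta_s$, $2(n-qp)-s+1-\delta_s$ involve $n$ and $q$ only through $r=n-qp$), so $|\mathcal{S}|\le 2(p-1)^2$ and $\psi\le p-1$ uniformly in $n$; and (ii) the right-hand side of (\ref{eq3})--(\ref{eq4}) equals $pq^2+(2r-1)q+\psi\ge q(pq-1)$, which tends to infinity with $q$ for fixed $p$. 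Hence $\bigl|\mathcal{S}-(2nq-pq^2-q+\psi)\bigr|>2$ once $q(pq-1)>2(p-1)^2+p+1$, giving an explicit $N_p=O(p^{3/2})$ beyond which $K_n$ fails to be Legendre cordial modulo $p$; intersecting over the finitely many odd primes $p\le m$ then gives $J(n,m)=0$ for all large $n$, which is stronger than the conjectured limit. The only dependency you should make explicit is that the whole argument is conditional on the size formula of Lemma~\ref{3.2} (equivalently Theorem~\ref{3.3}); since the cordiality criterion there is independent of the labeling $f$ (the multiset of sums $f(u)+f(v)$ is determined by $n$ alone), nothing in your reduction introduces new assumptions. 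The edge cases you mention ($q=0$, i.e.\ $n<p$, and $r=0$) concern only finitely many $n$ per prime and do not affect the limit, as you note.
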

	
	\section{Conclusion}
	\tab In this paper, we investigated the characterization of Legendre cordial labeling of complete graphs by employing the concept of the Legendre graph. We established necessary and sufficient conditions under which a complete graph admits a Legendre cordial labeling modulo $p$. To complement the theoretical results, a computational algorithm was developed to determine whether a complete graph of order $n$ admits a Legendre cordial labeling modulo $p$. Using this algorithm, values of $J(n,m)$ were generated and analyzed for several choices of $m$. The line plots in Figures~\ref{fig1} and \ref{fig3} consistently show an overall decreasing tendency, providing empirical support for the theoretical restrictions. The combination of analytical results and computational evidence naturally leads to a general conjecture: the number of primes for which a complete graph admits Legendre cordial labeling tends to zero as the order of the graph grows. Future work may consider extending these results to other graph classes, such as bipartite or multipartite graphs, as well as exploring potential applications in areas where number theory and graph labeling intersect, including cryptography and coding theory.

	\end{document}